\newcommand{\meshset}{\mathbb{T}}				
\newcommand{\mesh}{\mathcal{T}}					
\newcommand{\meshl}{\mathcal{T}_{\ell}}			
\newcommand{\norm}[1]{\Vert #1 \Vert }
\newcommand{\osc}{\operatorname{osc}}
\newcommand{\mytensor}[1]{ \pmb{#1} }
\newcommand{\R}{\mathbb{R}} 
\newcommand{\N}{\mathbb{N}} 
\theoremstyle{dgthm}
\newtheorem{theorem}{Theorem}
\newtheorem{lemma}{Lemma}
\theoremstyle{dgdef}
\newtheorem{remark}{Remark}
\begin{document}

	\articletype{Research Article}
	\received{Month	DD, 20YY}
	\revised{Month	DD, 20YY}
  \accepted{Month	DD, 20YY}
  \journalname{Computational Methods in Applied Mathematics}
  \journalyear{20YY}
  \journalvolume{XX}
  \journalissue{X}
  \startpage{1}
  \aop
  \DOI{10.1515/sample-YYYY-XXXX}

\title{On an optimal AFEM for elastoplasticity}
\runningtitle{On an optimal AFEM for elastoplasticity}

\author*[2]{Miriam Schönauer} 
\author[1]{Andreas Schröder}

\affil[1]{\protect\raggedright 
Paris Lodron University of Salzburg, Department of Mathematics, Salzburg, Austria, e-mail: andreas.schroeder@plus.ac.at}
\affil[2]{\protect\raggedright 
Paris Lodron University of Salzburg, Department of Mathematics, Salzburg , Austria, e-mail: miriam.schoenauer@plus.ac.at}
	
	
\abstract{In this paper, optimal convergence for an adaptive finite element algorithm for elastoplasticity is considered. To this end, the proposed adaptive algorithm is established within the abstract framework of the axioms of adaptivity [Comput. Math. Appl., 67(6) (2014), 1195-1253], which provides a specific proceeding to prove the optimal convergence of the scheme. The proceeding is based on verifying four axioms, which ensure the optimal convergence. The verification is done by using results from [Numer. Math., 132(1) (2016), 131-154], which presents an alternative approach to optimality without explicitly relying on the axioms.}
\keywords{AFEM, optimal convergence, axioms of adaptivity, elastoplasticity}
\classification[MSC 2010]{65N30}

\maketitle

\section{Introduction} 

The behaviour of elastoplastic material under the influence of stress is an important topic in mechanical engineering, see e.g. \cite{ChenElasto, GeoElasto}. 
Elastoplastic materials behave elastically until a certain threshold of stress is exceeded, and then undergo plastic deformation. 
A model problem of elastoplasticity results from  the so-called "primal problem of elastoplasticity with isotropic and linear kinematic
hardening" and can be described through a variational inequality of the second kind \cite{CarstensenDiscrete,CarstensenReliability,SchroederWiedemann}.\\
The use of adaptive finite element methods to approximately solve problems of mechanical engineering has been of high interest,
as it promises an efficient solution with as little computation effort as possible in terms of degree of freedom and/or of computational time. The convergence of adaptive finite element methods was discussed early on in \cite{Bubuska} and proven in \cite{Doerfler} and has been further studied in e.g. \cite{BDVNVB,DorflerWilde, Morin,DataoscConvergence}. Optimal convergence of adaptive finite element methods has been verified in several cases, e.g. \cite{NochettoQuasiOptimal, FeischlOptimal,StevensonStokes,StevensonOptimality}. In \cite{Axioms} an abstract framework for the verification of optimal convergence is outlined. The results of \cite{Axioms} show for the standard loop of

\begin{center}{\textbf{Solve} \(\rightarrow\) \textbf{Estimate} \(\rightarrow\) \textbf{Mark} \(\rightarrow\) \textbf{Refine}}\end{center}

\noindent that if an adaptive finite element method satisfies a set of four axioms, optimal convergence of that method
is guaranteed. These four axioms of adaptivity are "stability" (A1), "reduction property" (A2), "general quasi-orthogonality" (A3) and "discrete reliability" (A4).\\
Adaptive finite element methods are applied to problems of elastoplasticity and, in particular, to the primal problem of elastoplasticity e.g. \cite{CarstensenReliability,CarstensenAveraging,StarkeElasto}. However, results concerning convergence or even optimal convergence are very rare in the literature. In \cite{ElastoAFEM} an adaptive finite element method for elastoplasticity is introduced, and its optimal convergence is proven. However, the proof does not explicitly use the framework of the axioms of adaptivity, but gives the impression that the axioms are applicable to this specific problem.\medskip\\
In this paper, we prove that the adaptive finite element method for elastoplasticity described in \cite{ElastoAFEM} satisfies the axioms of adaptivity of \cite{Axioms}, which guarantee optimal convergence.\\
To that end, we verify that the error measure and error estimation satisfy the necessary requirements. We also recall that the refinement strategy of newest vertex bisection - generalized to dimensions equal to or
higher than two as in \cite{Maubach, StevensonNVB, Traxler} - fulfil the required properties.\\
We apply several results from \cite{ElastoAFEM} to show that the axioms of adaptivity hold. In particular, we use results on properties of the error estimator on refined and non-refined elements of the mesh, as well as a result on "discrete reliability" of the error estimator. It is of significant importance that the error of energies is equivalent to the error measure, which is shown in \cite{ElastoAFEM}.\\
Verifying the axioms, we observe similarities and differences that become apparent between the two proof methodologies \cite{Axioms} and \cite{ElastoAFEM}. For instance, certain results of \cite{ElastoAFEM} act as counterparts to the "stability" axiom (A1), "reduction property" axiom (A2) and "discrete reliability" axiom (A4). Additionally, reliability is treated differently in both papers: as an independent result in \cite{ElastoAFEM} and as a consequence of the "discrete reliability" axiom (A4) in \cite{Axioms}. Both proof methodologies follow the same structure, however, one key difference should be emphasized: the use of efficiency in \cite{ElastoAFEM}. The proof of optimality presented in this paper does not need the efficiency of the error estimator at any point (as it is based on the abstract framework in \cite{Axioms}), whereas in \cite{ElastoAFEM} it is a critical component in the proof of optimal convergence. This shows a clear advantage of the approach of \cite{Axioms}, which remains applicable even if the error estimator is not efficient, whereas the method of \cite{ElastoAFEM} depends on the efficiency of the estimator. This and the streamlined optimality proof provided by the abstract framework of \cite{Axioms} emphasizes the broad applicability of the methods of \cite{Axioms} even to non-linear problems.\\
The paper is organized as follows. In Section~2 we introduce a model problem resulting from one quasi-static time step of the primal problem of elastoplasticity with isotropic and linear kinematic hardening and derive its weak formulation as a variational inequality of second kind. In Section~3 we discuss the discretization with adaptive finite elements. Section~4 is devoted to the proof of optimal convergence using the axioms of adaptivity. We first introduce some preliminaries of \cite{ElastoAFEM} and then show that the axioms (A1)-(A4) hold, which gives the optimal convergence of the scheme. In the last section, we give some remarks on the two proof methodologies, described in \cite{Axioms} and \cite{ElastoAFEM}. This paper uses standard notation of Lebesgue and Sobolev spaces and denotes the \(L^2\) inner product by
\((\cdot,\cdot)_{L^2}\). Throughout this paper, \(X\lesssim Y\) abbreviates \(X\leq C\:Y\) for some positive constant $C$ (which is independent of $Y$).

\section{Model of elastoplasticity}\label{Section2 Model of Elasto}

In this section, we introduce a model problem of elastoplasticity which results from the formulation of one quasi-static time step of the so-called "primal problem of elastoplasticity with isotropic and linear kinematic hardening", see, e.g., \cite{CarstensenDiscrete,ElastoAFEM, HanReddy, SchroederWiedemann}. 
For this purpose, let \(\Omega\subset\R^d\) with \(d\in\{2,3\}\) be the reference domain describing the region of an elastoplastic body without deformation. We assume that \(\Omega\) has a Lipschitz boundary \(\partial\Omega\). The boundary is separated into the closed and non-empty Dirichlet part \(\Gamma_D\subset\partial\Omega\) with positive surface measure and the open Neumann part \(\Gamma_N:=\partial\Omega\backslash\Gamma_D\), which has the outer unit normal \(\nu\) and may be empty. Two forces act on \(\Omega\) namely the body force \(f\in L^2(\Omega;\R^d)\) and the surface traction \(g\in L^2(\Gamma_N;\R^d)\). 
We denote the field of displacements by 

\begin{equation*}
V:=\{v\in H^1(\Omega;\R^d) \mid v=0 \text{ on }\Gamma_D\}
\end{equation*}

\noindent and the linearized Green tensor $\mytensor{\varepsilon}$ by

\begin{equation*}
\mytensor{\varepsilon}(w):=\frac12\: \big(\nabla w+(\nabla w)^T\big)
\end{equation*}

\noindent for $w\in V$.


\subsection{Equilibrium equation}

In the framework of elastoplasticity with small strain, the tensor is usually decomposed as

\begin{equation*}
\mytensor{\varepsilon}(w)=\mytensor{e}+\mytensor{p},
\end{equation*}

\noindent where \(\mytensor{e}\) is the elastic part and \(\mytensor{p}\) the plastic part. By \(Q\) we denote the space of plastic strains, i.e. 

\begin{equation*}
Q:=L^2\left(\Omega;\mathbb{S}_{d,0}\right)
\end{equation*}

\noindent with 

\begin{equation*}
\mathbb{S}_{d,0}=\left\{\mytensor{q}\in\mathbb{S}_d\mid \sum\limits_{j=1}^d q_{jj}=0\right\},\quad \mathbb{S}_d:=\{\mytensor{q}\in\R^{d\times d}\mid  q_{ji}=q_{ij}\}
\end{equation*}

\noindent and introduce the stress tensor 

\begin{equation*}
\mytensor{\sigma}(w,\mytensor{p}):=\mathbb{C}( \mytensor{\varepsilon}(w)-\mytensor{p})
\end{equation*}

\noindent with \(\mathbb{C}\) denoting the isotropic elasticity tensor, which is assumed to be symmetric, constant and to fulfil

\begin{equation*}
\kappa_{\mathbb{C}}\:|\mytensor{\tau}|^2\leq \mathbb{C}\mytensor{\tau} : \mytensor{\tau}
\end{equation*}

\noindent for all $\mytensor{\tau}\in L^2(\Omega;\mathbb{S}_d)$ with the constant $\kappa_{\mathbb{C}}>0$.\\
With these notations, we formulate the equilibrium equations by
\begin{subequations}
    \begin{align}
        \operatorname{div}\mytensor{\sigma}(w,\mytensor{p})+f &= 0\text{ in }\Omega\label{equilibrium}\\
        \mytensor{\sigma}(w,\mytensor{p})\nu &= g\text{ on } \Gamma_N \label{NeumanBoundary}
    \end{align}
\end{subequations}

\noindent for some $(w,\mytensor{p})\in V\times Q$. Multiplying \eqref{equilibrium} with a test function $v\in V$ and applying integration by parts, we conclude that $(w,\mytensor{p})$ fulfils \eqref{equilibrium} and \eqref{NeumanBoundary} if and only if 

\begin{equation}
(\mytensor{\sigma}(w,\mytensor{p}),\mytensor{\varepsilon}(v))_{L^2(\Omega;\R^{d\times d})}=(f,v)_{L^2(\Omega;\R^d)}+(g,v)_{L^2(\Gamma_N;\R^d)}
\label{equilibrium transform}
\end{equation}

\noindent for all \(v\in V\).


\subsection{Linear hardening}
The linear hardening state law is given by

\begin{equation}
\chi = \mathbb{H}\mytensor{p},\quad R = H\alpha,
\label{stateLaw}
\end{equation}

\noindent where \(H>0\) denotes the isotropic hardening modulus and \(\mathbb{H}\) is the hardening tensor, which is assumed to be symmetric and to satisfy

\begin{equation*}
\kappa_{\mathbb{H}}\:|\mytensor{\tau}|^2\leq (\mathbb{H}\mytensor{\tau}) : \mytensor{\tau} 
\end{equation*}

\noindent for all $\mytensor{\tau}\in L^2(\Omega;\mathbb{S}_d$) with the constant \(\kappa_{\mathbb{H}}>0\). In \eqref{stateLaw}
\(\chi\in Q\) is the back stress tensor and \(\alpha\in M:=L^2(\Omega)\) is the accumulated plastic strain with its dual variable \(R\in M\).\\
From the time discrete version of the elastoplastic evolution laws, it holds 

\begin{equation}\label{evolutionlaw}
(\mytensor{\sigma},\chi,R)\in \partial j(\mytensor{p},\alpha)
\end{equation}

\noindent where \(\partial j \) is the subdifferential of the dissipation potential \(j\) \cite{FunConvex}, which is assumed to be convex, lower semi-continuous and positively homogeneous of degree one. In the isotropic hardening case, the dissipation potential is given by

\begin{equation}\label{convex functional}
j(\mytensor{q},\beta):=\begin{cases}
\sigma_y|\mytensor{q}|, &|\mytensor{q}|\leq \beta,\\
\infty, &\text{ else}
\end{cases}
\end{equation}

\noindent with the constant yield stress \(\sigma_y>0\). The expression \eqref{evolutionlaw} can equivalently be written as

\begin{align}
(\mathbb{H}\mytensor{p}-\mytensor{\sigma}(w,\mytensor{p}),\mytensor{p}-\mytensor{q})_{L^2(\Omega;\R^{d\times d})}+(H\alpha,\alpha-\beta)_{L^2(\Omega)}+\int\limits_{\Omega}j(\mytensor{p},\alpha)-j(\mytensor{q},\beta)\,dx\leq 0
\label{subdif dual}
\end{align}

\noindent for all \((\mytensor{q},\beta)\in Q\times M\).


\subsection{Weak formulation of variational inequality}

Adding \eqref{equilibrium transform} to \eqref{subdif dual},  we find that $(w,\mytensor{p})\in V\times Q$ and $\alpha\in M$ fulfil \eqref{equilibrium transform} for all $v\in V$ and \eqref{subdif dual} for all $(\mytensor{q},\beta)\in Q\times M$, respectively, if and only if $u:=(w,\mytensor{p},\alpha)\in X:=V\times Q\times M$ 
satisfies the variational inequality

\begin{equation}
b(z-u)\leq a(u,z-u)+\psi(z)-\psi(u)
\label{weak form}
\end{equation}

\noindent for all \(z\in X\), where 

\begin{align*}
a(u,z)&:=(\mytensor{\sigma}(w,\mytensor{p}),\mytensor{\varepsilon}(v)-\mytensor{q})_{L^2(\Omega;\R^{d\times d})}+(\mathbb{H}\mytensor{p},\mytensor{q})_{L^2(\Omega;\R^{d\times d})}+(H\alpha,\beta)_{L^2(\Omega)},\\
\psi(z)&:=\int\limits_{\Omega}j(\mytensor{q},\beta)\,dx ,\\
b(z)&:=(f,v)_{L^2(\Omega;\R^d)}+(g,v)_{L^2(\Gamma_N;\R^d)}
\end{align*}

\noindent for \(u=(w,\mytensor{p},\alpha)\) and \(z=(v,\mytensor{q},\beta)\in X\). Equipping the space \(X\) with the inner product

\begin{equation*}
(u,z)_X:=(w,v)_{H^1(\Omega;\R^d)}+(\mytensor{p},\mytensor{q})_{L^2(\Omega;\R^{d\times d})}+(\alpha,\beta)_{L^2(\Omega)}
\end{equation*}

\noindent and using the norm \(\norm{u}_X:=(u,u)_X^{1/2}\), we conclude that
\(b(\cdot)\) is continuous, that \(\psi(\cdot)\) is a convex, lower semi-continuous and positive homogeneous functional and that \(a(\cdot,\cdot)\) is symmetric and continuous. Moreover, Korn’s inequality yields that \(a(\cdot,\cdot)\) is $X$-coercive, i.e. it satisfies

\begin{equation*}
\kappa\: \norm{z}^2_X\leq a(z,z)
\end{equation*}

\noindent for all \(z\in X\) with a constant \(\kappa>0\) \cite{HanReddy}. Therefore, there exists a unique solution of \eqref{weak form}, which is also the unique minimizer of the functional

\begin{equation*}
E(z):=\frac12\: a(z,z)-b(z)+\psi(z)
\end{equation*}

\noindent with $z\in X$, see \cite{HanReddy}.

\section{Discretization with adaptive finite elements}\label{Section3 Discretization}

Let \(\mesh\) be a mesh, i.e.~a finite decomposition of \(\Omega\) such that

\begin{equation*}
\overline{\Omega}=\bigcup\limits_{T\in \mesh}T
\end{equation*}

\noindent where the elements of $\mesh$ are closed triangles if $d=2$ and closed tetrahedrons if $d=3$.
As usual, we assume that $\mesh$ is conforming, i.e. for two elements $T,T’\in\mesh$ it holds that $T\cap T’$ is either empty or a face, an edge or a vertex of $T$ and $T'$.\\
In the following subsection, we briefly list the essential components of the discretization with adaptive finite elements to be used in Section \ref{Section4 An optimal AFEM}.


\subsection{Discrete solution}\label{discreteSolution}

Let \(\mathcal{P}_k(\mesh;\mathbb{R}^d)\) be the space of (vector-valued) piecewise affine linear functions for $k=1$ or the space of (vector-valued) piecewise constant functions for $k=0$ with respect to \(\mesh\). We set

\begin{equation*}
V(\mesh):=\mathcal{P}_1(\mesh;\mathbb{R}^d)\cap V,\quad Q(\mesh):=\mathcal{P}_0(\mesh;\mathbb{S}_{d,0}),\quad M(\mesh):=\mathcal{P}_0(\mesh)
\end{equation*}

\noindent and define \( X(\mesh):=V(\mesh)\times Q(\mesh)\times M(\mesh) \). The discrete problem of \eqref{weak form} consists in finding a discrete solution \(U(\mesh)\in X(\mesh)\), which satisfies

\begin{equation}
b(z-U(\mesh))\leq a(U(\mesh),z-U(\mesh))+\psi(z)-\psi(U(\mesh))
\label{Discrete Variational inequality}
\end{equation}

\noindent for all  $z\in X(\mesh)$.
The existence and uniqueness of $U(\mesh)$ follows from the same arguments as in the non-discretized case  \eqref{weak form} and minimizes the operator \(E\) over \(X(\mesh)\), see \cite{HanReddy}.


\subsection{Error measure}\label{errorMeasure}

We introduce the error measure $\mathrm{d}[\cdot,\cdot]$ as

\begin{equation*}
\mathrm{d}[z,\hat{z}]:=\mathrm{d}[z,\hat{z};\Omega]:= \norm{\mytensor{\sigma}(v,\mytensor{q})-\mytensor{\sigma}(\hat{v},\hat{\mytensor{q}})}_{L^2(\Omega;\R^{d\times d})}
\end{equation*}

\noindent with $z=(v,\mytensor{q},\beta),\; \hat{z}=(\hat{v},\hat{\mytensor{q}},\hat{\beta}) \in X$. It holds $\mathrm{d}[z,\hat{z}]\geq 0$ (non-negativity), $\mathrm{d}[z,\hat{z}]\geq C_\Delta \mathrm{d}[\hat{z},z]$ (quasi-symmetry) and $C_\Delta^{-1} \mathrm{d}[z,\hat{z}]\leq \mathrm{d}[\hat{z},y]+d[y,\hat{z}]$ (quasi-triangle inequality) for all $z,\hat{z},y\in  X \cup X(\mesh)$ where $C_\Delta:=1$. Moreover, $\mathrm{d}[z,\hat{z}]$ is independent of $\mesh$ which means that the so-called "compatibility condition" \cite[Section 2.2]{Axioms} is satisfied. Finally, by using results from \cite{HanReddy} the so-called "further approximation property" \cite[Section 2.2]{Axioms} is also fulfilled. These properties have to be fulfilled in the abstract framework of \cite{Axioms}.

\subsection{Refinement indicator and global error estimator}\label{refinementIndicator}

We define the refinement indicator \(\eta_T(\mesh;\cdot)\) as

\begin{equation*}
\eta_T(\mesh;z):= |T|\:\Vert f\Vert_{L^2(T;\mathbb{R}^d)}^2+|T|^{1/2}\sum\limits_{E\in\mathcal{E}(T)}R_E^2(z)
\end{equation*}

\noindent for $T\in\mathcal{T}$ and $z=(v,\mytensor{q},\beta
)\in X(\mesh)$, where

\begin{equation*}
R_E(z):=\begin{cases}
\norm{[\mytensor{\sigma}(v,\mytensor{q})]\nu_E}_{L^2(E;\R^d)}& E\in \mathcal{E}(\mesh),\\
\norm{g-\mytensor{\sigma}(v,\mytensor{q})\nu_E}_{L^2(E;\R^d)}& E\in \mathcal{E}_N(\mesh),\\
0& E\in \mathcal{E}_D(\mesh).
\end{cases}
\end{equation*}

\noindent Here, \([\cdot]\) denotes the jump along an edge (or face) $E$ of \(\mesh\), \(\nu_E\) represents the fixed unit normal vector corresponding to $E$ and $|T|$ is the measure of $T\in\mathcal{T}$. Moreover, \(\mathcal{E}(T)\) is the set of all edges (or faces) of $T$, \(\mathcal{E}(\mesh)\) is the set of all interior edges (or faces), \(\mathcal{E}_N(\mesh)\) is the set of all edges (or faces) on \(\Gamma_N\) and \(\mathcal{E}_D(\mesh)\) is the set of all edges (or faces) on \(\Gamma_D\).
The global error estimator \(\eta(\mesh;\cdot)\) is given by

\begin{equation*}
\eta(\mesh;z)^2:=\sum\limits_{T\in\mesh}\eta_T(\mesh;z)^2
\end{equation*}

\noindent for $z\in X(\mesh)$. We note that the error estimator is reliable, i.e.

\begin{equation}
\mathrm{d}[u,U(\mathcal{T})]\lesssim \eta(\mathcal{T};U(\mathcal{T})).
\label{reliability}
\end{equation}

\noindent The proof of the reliability can be found in \cite{CarstensenReliability}.
The error estimator is also efficient, i.e.~
\begin{equation}
    \eta(\mesh;U(\mesh))^2 \lesssim \mathrm{d}[u,U(\mesh)]^2+\osc^2(\mesh)
    \label{efficiency}
\end{equation}
\noindent with the oscillations $\osc^2(\mesh):=\operatorname{osc}^2(f,\mesh)+\operatorname{osc}^2(g,{\cal E}_N(\mesh))$ given by

\begin{align*}
\osc^2(T;f)&:=|T|\: \|f-f_T\|_{L^2(T;{\mathbb R}^d)}^2,\quad \operatorname{osc}^2(f,\mathcal{ M}):=\sum_{T\in {\mathcal M}}\osc^2(T;f),\\
 \osc^2(E;g)&:=|T|^{1/2}\: \|g-g_E\|_{L^2(E;{\mathbb R}^d)}^2,\quad \operatorname{osc}^2(g,{\cal F}):=\sum_{E\in {\mathcal F}} \osc^2(E;g)
\end{align*}

\noindent for $\mathcal{M}\subset \mesh \text{ and } {\cal F}\subset {\cal E}_N(\mesh)$, 
where the integral means are defined as
$f_T:=|T|^{-1}\int_T f\,dx$ and $g_E:=|E|^{-1}\int_E g\,ds$ for $E\in {\cal E}_\ell(\Gamma_N)$. Again, we refer to \cite{CarstensenReliability} for a proof.


\subsection{Newest vertex bisection}\label{NVB}

We briefly recall the mesh refinement with the newest vertex bisection, where we use the same notation as in \cite{PraetoriusGantner}. For this purpose, let \(\mathcal{M}\subset \mesh\) indicate those elements of $\mesh$ which are indicated to be refined and let \(E_T\) be a fixed edge of $T\in\mesh$ (the reference edge). The newest vertex bisection generates a new mesh $\operatorname{NVB}(\mesh,\mathcal{M})$ in the following way: First, each \(T\in\mathcal{M}\) is bisected into two triangles or tetrahedrons, respectively, where the midpoint of $E_T$ (the newest vertex) is connected to the vertex opposite to $E_T$. The edges opposite to the newest vertex are the reference edges of the new triangles or tetrahedrons.  
Second, in the same way all elements $T$ of the resulting mesh which have a hanging node are bisected (i.e. a vertex of the resulting mesh which is in the interior of an edge of $T$). The second refinement step is performed until the resulting mesh has no hanging nodes.
Note that the second step (the mesh-closure step) leads to a finite number of additional bisections.   
We refer to \cite{BDVNVB,StevensonNVB} for more details on the newest vertex bisection.
By $\mathbb{T}=\mathbb{T}(\mesh)$ we denote the set of all meshes which are created by a finite number of successive applications of the newest vertex bisections of $\mesh$. Note that all meshes $\hat{\mesh}\in\mathbb{T}$ fulfil 

\begin{equation}
|\mesh\backslash\hat{\mesh}|\leq |\hat{\mesh}|-|\mesh|
\label{ersteMeshProp}
\end{equation}

\noindent and are uniformly shape regular, i.e. there exists a constant $c>0$ such that $h_T\leq c\: \rho_T$ for all $T\in \hat{\mesh}$ and all $\hat{\mesh}\in\mathbb{T}$, 
where $h_T$ is the maximum diameter and $\rho_T$ is  the maximum inner circle radius  of \(T\), see \cite{BDVNVB,StevensonNVB}.
Further note that

\begin{equation}
|\operatorname{NVB}(\mesh,\mathcal{M})|-|\mesh|\leq (C_{\text{son}}-1)\:|\mesh|
\label{max refinement anzahl}
\end{equation}

\noindent with a constant $C_{\text{son}}>0$. 
We refer to \cite[Cor.3.5]{StevensonNVBRemark} which implies that the constant \(C_{\text{son}}\) is finite for $d\geq 2$. It is well-known (see  \cite{NochettoQuasiOptimal}) that there exists a coarsest common refinement \(\mesh'\oplus\mesh''\in\meshset(\mesh')\cap \meshset(\mesh'')\subset\meshset \) of two meshes $\mesh',\mesh''\in\mathbb{T}$, which satisfies

 \begin{equation}
|\mesh'\oplus\mesh''|\leq |\mesh'|+|\mesh''|-|\mesh|.
\label{CCRestimate}
\end{equation}


\subsection{Adaptive refinements}\label{adaptiveRefinements}

Adaptive refinements based on the error indicator $\eta(\mesh_{\ell};\cdot)$ are given by a sequence of meshes $\mesh_\ell$ with an initial finite decomposition $\mesh_0$ of $\Omega$ and

\begin{equation*}
    \mesh_{\ell+1}:=\operatorname{NVB}(\mesh_\ell,\mathcal{M}_\ell)
\end{equation*}

\noindent for $\ell\in\mathbb{N}_0$, 
where $\mathcal{M}_\ell\subset \mesh_\ell$ is a set of minimal cardinality such that

\begin{equation*}
\theta\: \eta(\mesh_{\ell};U(\mesh_\ell))^2\leq \sum\limits_{T\in\mathcal{M}_{\ell}}\eta_T(\mesh_{\ell};U(\mesh_{\ell}))^2
\end{equation*}

\noindent for a bulk parameter $0<\theta\leq 1$. It holds 

\begin{equation}
|\meshl|-|\mesh_0|\leq C_{\text{mesh}}\sum\limits_{k=0}^{\ell-1}|\mathcal{M}_k|
\label{BDVestimate}
\end{equation}

\noindent for all $\ell\in\N$ and a constant \(C_{\text{mesh}}>0\) depending on \(\meshset(\mesh_0)\).
For $d=2$ the proof of \eqref{BDVestimate} can be found in \cite{BDVNVB}. We refer to \cite[Thm.6.1]{StevensonNVB} for the case $d=3$.

\begin{remark}
We note that instead of the newest vertex bisection, any refinement strategy may be chosen as long as it satisfies the properties \eqref{ersteMeshProp}-\eqref{BDVestimate}.
\end{remark}

\section{An optimal adaptive finite element method}\label{Section4 An optimal AFEM}

\subsection{Preliminaries}

In this subsection, we collect some results from \cite{ElastoAFEM} which are needed in the following subsections. They are formulated either for the meshes $\meshl$ generated by the adaptive refinements of Section \ref{adaptiveRefinements} or for some general mesh $\mesh$.   

\noindent Let $\hat{\mesh}\in \mathbb{T}$ and let $\hat{\mathcal{S}}(T)\subset \hat{\mesh}$ be the set of those elements which are generated by some newest vertex bisections of $T\in \mesh$. The result of \cite[Lem.4]{ElastoAFEM} states that there exists \(\Lambda_1>0\) such that

\begin{equation}
\sum_{\hat{T}\in\mathcal{S}(T)} \eta_{\hat{T}}(\hat{\mesh};U(\hat{\mesh}))^2\leq 2^{-1/2}(1+\lambda)\:\eta_T(\mesh;U(\mesh))^2+\Lambda_1(1+1/\lambda)\:\mathrm{d}[U(\hat{\mesh}),U(\mesh);\omega_T]^2
\label{lem4}
\end{equation}

\noindent for all \(\lambda>0\) and all \(T\in\mesh\backslash\hat{\mesh}\).  Here, $\omega_T:=\bigcup_{E\in\mathcal{E}(T)} \omega_E$ and \(\omega_{E}:=\operatorname{int}(T\cup T')\) for \(T,T'\in \mesh\) and \(E\in \mathcal{E}(T)\cap\mathcal{E}(T')\). 
The result of \cite[Thm.6]{ElastoAFEM} yields the existence of \(\beta\geq 0\) and \(0<\rho<1\) such that

\begin{equation}
\eta(\mesh_{\ell+1};U(\mesh_{\ell+1}))^2+\beta\:(E(U(\mesh_{\ell+1}))-E(u))\leq \rho \:\Big(\eta(\meshl;U(\meshl))^2+\beta\big(E(U(\meshl))-E(u)\big)\Big)
\label{etaE}
\end{equation}

\noindent for all $\ell\in \mathbb{N}$. Furthermore, it holds 

\begin{equation}
E(U(\mesh))-E(u)\lesssim \mathrm{d}[u,U(\mesh)]^2\lesssim E(U(\mesh))-E(u)
\label{ESigma}
\end{equation}

\noindent which is \cite[Thm.1]{ElastoAFEM}. The result of \cite[Thm.3]{ElastoAFEM} provides the "discrete reliability"

\begin{equation}
    \mathrm{d}[U(\hat{\mesh}),U(\mesh)]^2\lesssim \sum\limits_{T\in\mesh\backslash\hat{\mesh}}\eta_T(\mesh;U(\mesh))^2
    \label{discreteReliability}
\end{equation}

\noindent for all $\hat{\mesh}\in\mathbb{T}$. Finally, we note some properties of the oscillations defined in Section \ref{refinementIndicator}, namely

\begin{equation}
 \osc^2(f;\hat{S}(T))\leq\osc^2(f;T)\leq \eta_T(\meshl;U(\meshl))^2,
 \label{osc estimate 1}
\end{equation}

\noindent for all $T\in\mesh$ and

\begin{equation}
  \osc^2(\hat{\mathcal{E}}_N(F);g)\leq\osc^2(F;g)\leq \eta_T(\meshl;U(\meshl))^2 
  \label{osc estimate 2}
\end{equation}

\noindent for all $F\in {\cal E}(T)\cap {\cal E}_N(\mesh)$ where
$\hat{\mathcal{E}}_N(F):=\{\hat{F}\in {\cal E}_{N}(\hat{S}(T))\mid \hat{F}\subset F\}$. We refer to \cite[Lem.2]{ElastoAFEM} for a proof.

\begin{remark}
The equivalence of the error of energies and error of stresses stated in \eqref{ESigma} is fundamental to the proof of optimality in \cite{ElastoAFEM}. The right-hand side of this chain of inequalities can be derived from the variational inequality \eqref{weak form}. To gain the inequality on the left-hand side, Jensen's inequality is applied (see \cite{ElstrodtJensen}) to the convex functional $j$ as introduced in \eqref{convex functional}. Then orthogonality properties of the $L^2$-projection are utilized to obtain the desired inequality.
\end{remark}

\subsection{The axioms of adaptivity}

In this section, we verify the four axioms of adaptivity (A1) - (A4) (see introduction) for the adaptive discretization of the model problem of elastoplasticity, i.e.~for the discrete solution $U$ of Section \ref{discreteSolution}, the error measure $\mathrm{d}[\cdot,\cdot]$ of Section \ref{errorMeasure} and the refinement indicator $\eta_T$ and the global error estimator $\eta$ in conjunction with the newest vertex bisection and the adaptive refinements of the Sections \ref{NVB} and \ref{adaptiveRefinements}.\\
We start with the first axiom (A1), which is "stability on non-refined element domains" i.e. on subsets of $\mesh\cap\hat{\mesh}$ for $\hat{\mesh}\in\meshset$.

\begin{lemma}[A1] \label{LemA1}
  There exists a constant $C_1>0$ such that
\begin{equation*}
\bigg|\bigg(\sum\limits_{T\in S}\eta_T(\hat{\mesh};\hat{z})^2\bigg)^{1/2}-\bigg(\sum\limits_{T\in S}\eta_T(\mesh;z)^2\bigg)^{1/2}\bigg|\leq C_1\:\mathrm{d}[\hat{z},z]
\end{equation*}

\noindent for all \(\hat{\mesh}\in\mathbb{T}\), \(\mathcal{S}\subseteq \mesh\cap\hat{\mesh}\), $z\in X(\mesh)$ and  $\hat{z}\in X(\hat{\mesh})$.
\end{lemma}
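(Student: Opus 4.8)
The plan is to reduce the claim to an estimate on each individual element $T\in\mathcal S$. Since $\mathcal S\subseteq\mesh\cap\hat\mesh$, each such $T$ is an element of both meshes, so $\mathcal E(T)$, $|T|$ and the classification of edges into interior/Neumann/Dirichlet are the same whether computed in $\mesh$ or $\hat\mesh$. Hence the only difference between $\eta_T(\hat\mesh;\hat z)$ and $\eta_T(\mesh;z)$ comes from the arguments $\hat z$ and $z$ entering through $\mytensor\sigma$. First I would invoke the reverse triangle inequality in $\ell^2$ over $\mathcal S$, namely
\begin{equation*}
\bigg|\bigg(\sum_{T\in\mathcal S}\eta_T(\hat\mesh;\hat z)^2\bigg)^{1/2}-\bigg(\sum_{T\in\mathcal S}\eta_T(\mesh;z)^2\bigg)^{1/2}\bigg|
\leq\bigg(\sum_{T\in\mathcal S}\big(\eta_T(\hat\mesh;\hat z)-\eta_T(\mesh;z)\big)^2\bigg)^{1/2},
\end{equation*}
and then bound $|\eta_T(\hat\mesh;\hat z)-\eta_T(\mesh;z)|$ elementwise, again by the reverse triangle inequality applied to the definition of $\eta_T$ as an $\ell^2$-type sum of the volume term $|T|^{1/2}\|f\|_{L^2(T)}$ and the edge terms $|T|^{1/4}R_E(\cdot)$. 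The volume term $|T|\,\|f\|_{L^2(T;\R^d)}^2$ is identical in both expressions and cancels, so only the edge contributions survive.

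Next I would estimate each edge difference $|R_E(\hat z)-R_E(\mesh;z)|$. Writing $\mytensor\tau:=\mytensor\sigma(\hat v,\hat{\mytensor q})-\mytensor\sigma(v,\mytensor q)$, the reverse triangle inequality in $L^2(E;\R^d)$ gives, in the interior case, $|R_E(\hat z)-R_E(z)|\le\|[\mytensor\tau]\nu_E\|_{L^2(E;\R^d)}$, and in the Neumann case $|R_E(\hat z)-R_E(z)|\le\|\mytensor\tau\nu_E\|_{L^2(E;\R^d)}$ (the data term $g$ cancels), while the Dirichlet case is trivial. Then a trace/inverse estimate—valid because the two-element patch $\omega_E$ is shape regular uniformly over $\mathbb T$ by the properties recalled in Section~\ref{NVB}, and because $\mytensor\tau$ restricted to $\omega_E$ is a piecewise polynomial (degree $0$ if $k=0$, degree $\le1$ if $k=1$) — yields
\begin{equation*}
|E|^{1/2}\,\|[\mytensor\tau]\nu_E\|_{L^2(E;\R^d)}^2\lesssim\|\mytensor\tau\|_{L^2(\omega_E;\R^{d\times d})}^2,
\end{equation*}
with a scaling factor $|E|^{1/2}$ that matches the weight $|T|^{1/2}$ in $\eta_T$ up to shape-regularity constants (since $|E|\simeq h_T^{d-1}$ and $|T|\simeq h_T^{d}$, one actually gets $|T|^{1/2}\|[\mytensor\tau]\nu_E\|_{L^2(E)}^2\lesssim h_T\,|E|^{1/2}\|[\mytensor\tau]\nu_E\|_{L^2(E)}^2\lesssim h_T\|\mytensor\tau\|_{L^2(\omega_E)}^2\lesssim\|\mytensor\tau\|_{L^2(\omega_E)}^2$ using boundedness of $h_T$ over the uniformly refined family, or more carefully one keeps the scaling exact and absorbs it). Summing the squared elementwise bounds over $T\in\mathcal S$ and using that the patches $\{\omega_T\}_{T\in\mesh}$ have finite overlap (again by shape regularity), the right-hand side collapses to $\lesssim\|\mytensor\sigma(\hat v,\hat{\mytensor q})-\mytensor\sigma(v,\mytensor q)\|_{L^2(\Omega;\R^{d\times d})}^2=\mathrm d[\hat z,z]^2$, which is exactly the asserted bound with a suitable $C_1$.

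The main obstacle is the bookkeeping of scaling powers of $|T|$ and $|E|$ together with the trace inequality: one must confirm that the weights $|T|$ and $|T|^{1/2}$ appearing in $\eta_T$ combine with the trace estimate $\|\cdot\|_{L^2(E)}^2\lesssim |E|^{-1}\|\cdot\|_{L^2(\omega_E)}^2$ (for polynomials on a shape-regular patch) to produce a dimensionless constant, which is where uniform shape regularity over $\mathbb T$ — recalled in Section~\ref{NVB} — is essential. A secondary point to be careful about is that $\hat z$ and $z$ live on different discrete spaces, but since we only evaluate both refinement indicators on elements $T\in\mesh\cap\hat\mesh$ where both functions are polynomials on $\omega_T\subset\bigcup(\mesh\cap\hat\mesh)$, no issue arises; one should just note that $\mytensor\sigma(\hat v,\hat{\mytensor q})-\mytensor\sigma(v,\mytensor q)\in L^2(\Omega;\mathbb S_d)$ so that $\mathrm d[\hat z,z]$ is well defined on $X\cup X(\mesh)$ as stated in Section~\ref{errorMeasure}.
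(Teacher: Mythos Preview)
Your proposal is correct and follows essentially the same route as the paper: reverse triangle inequality (the paper stacks all weighted contributions $|T_i|^{1/2}\|f\|_{L^2(T_i)}$ and $|T_i|^{1/4}R_{E_{i,j}}(\cdot)$ into a single vector $\gamma(z)\in\R^{n(m+1)}$ and applies it once, which is equivalent to your two-step nested version), cancellation of the $f$-term, reverse triangle inequality on each $R_E$, then the discrete trace estimate $\|[\mytensor\sigma(\hat v,\hat{\mytensor q})-\mytensor\sigma(v,\mytensor q)]\nu_E\|_{L^2(E)}\lesssim |E|^{-1/2}\,\mathrm d[\hat z,z;\omega_E]$ combined with shape regularity $|T|^{1/2}|E|^{-1}\lesssim h_T/\rho_T\lesssim 1$ and finite overlap of the patches. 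One small slip: your claim that $\omega_T\subset\bigcup(\mesh\cap\hat\mesh)$ is not generally true (neighbours of $T\in\mathcal S$ may be refined in $\hat\mesh$), but this is harmless since the trace/inverse estimate only needs piecewise-polynomial structure and uniform shape regularity on each side of $E$, which both $\mesh$ and $\hat\mesh$ provide.
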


\begin{proof}
Let \(\mathcal{S}:=\{T_1,\dots, T_n\}\) and 
\(E_{i,1},\dots,E_{i,m}\) with $m\in\{3,4\}$ be the edges (or faces) of $T_i$, $i\in\{1,\ldots,n\}$.
Defining the vector $\gamma(z):=(\gamma^1(z),\ldots,\gamma^n(z))\in\R^{n(m+1)}$ with subvectors $\gamma^i(z)\in\mathbb{R}^{m+1}$ defined as

\begin{equation*}
\gamma^i(z):=\left(|T_i|^{1/2}\:\norm{f}_{L^2(T_i;\R^d)},
|T_i|^{1/4}\: R_{E_{i,1}}(z), \ldots,|T_i|^{1/4}\:R_{E_{i,m}}(z)\right)
\end{equation*}

\noindent we write

\begin{equation*}
\left|\left(\sum\limits_{T\in S}\eta_T(\hat{\mesh};\hat{z})^2\right)^{1/2}-\left(\sum\limits_{T\in S}\eta_T(\mesh;z)^2\right)^{1/2}\right|=\big|\norm{\gamma(\hat{z})}-\norm{\gamma(z)}\big|.
\end{equation*}

\noindent Using the reverse triangle inequality, we get

\begin{align*}
\big|\norm{\gamma(\hat{z})}-\norm{\gamma(z)}\big|^2 & \leq \norm{\gamma(\hat{z})-\gamma(z)}^2\\
& =\sum\limits_{i=1}^n |T_i|^{1/2}\sum\limits_{j=1}^m \left(R_{E_{i,j}}(\hat{z})-R_{E_{i,j}}(z)\right)^2\\
& =\sum\limits_{i=1}^n |T_i|^{1/2}\left(\sum\limits_{E\in \mathcal{E}(T_i)\cap\mathcal{E}(\mesh_\ell)} \left(R_{E}(\hat{z})-R_{E}(z)\right)^2+ \sum\limits_{E\in \mathcal{E}(T_i)\cap\mathcal{E}_N(\mesh_\ell)}\left(R_{E}(\hat{z})-R_{E}(z)\right)^2\right)\\
& =\sum\limits_{i=1}^n |T_i|^{1/2}\Bigg(\sum\limits_{E\in \mathcal{E}(T_i)\cap\mathcal{E}(\mesh)}\left(\norm{[\mytensor{\sigma}(\hat{v},\hat{\mytensor{q}})]\nu_{E}}_{L^2(E;\R^d)}-\norm{[\mytensor{\sigma}(v,\mytensor{q})]\nu_{E}}_{L^2(E;\R^d)}\right)^2\\
&\quad + \sum\limits_{E\in\mathcal{E}(T_i)\cap\mathcal{E}_N(\mesh)}\left(\norm{g-\mytensor{\sigma}(\hat{v},\hat{\mytensor{q}})\nu_{E}}_{L^2(E;\R^d)}-\norm{g-\mytensor{\sigma}(v,\mytensor{q})\nu_{E}}_{L^2(E;\R^d)}\right)^2\Bigg).
\end{align*}

\noindent Note that the trace theorem and the shape regularity, see \cite{ElastoAFEM}, imply that 
\begin{equation*}
    \norm{[\mytensor{\sigma}(\hat{v},\hat{\mytensor{q}})-\mytensor{\sigma}(v,\mytensor{q})]\nu_{E}}_{L^2(E;\R^d)}\lesssim |E|^{-1/2}\: \mathrm{d}[\hat{z},z;\omega_E]
\end{equation*}
\noindent for $E\in \mathcal{E}(\mesh)$ and

\begin{equation*}
\norm{(\mytensor{\sigma}(\hat{v},\hat{\mytensor{q}})-\mytensor{\sigma}(v,\mytensor{q}))\nu_{E}}_{L^2(E;\R^d)}\lesssim |E|^{-1/2}\: \mathrm{d}[\hat{z},z;\omega_E]
\end{equation*}

\noindent for $E\in\mathcal{E}_N(\mesh)$. This and reapplying the reverse triangle inequality yield

\begin{align*}
\big|\norm{&\gamma(\hat{z})}-\norm{\gamma(z)}\big|^2\\
\quad & \leq\sum\limits_{i=1}^n |T_i|^{1/2}\left(\sum\limits_{E\in \mathcal{E}(T_i)\cap\mathcal{E}(\mesh_\ell)}\norm{[\mytensor{\sigma}(\hat{v},\hat{\mytensor{q}})-\mytensor{\sigma}(v,\mytensor{q})]\nu_{E}}_{L^2(E;\R^d)}^2\right.
\left.+ \sum\limits_{E\in\mathcal{E}(T_i)\cap\mathcal{E}_N(\mesh_\ell)} \norm{(\mytensor{\sigma}(v,\mytensor{q})-\mytensor{\sigma}(\hat{v},\hat{\mytensor{q}}))\nu_{E}}_{L^2(E;\R^d)}^2\right)\\
&\lesssim \sum\limits_{i=1}^n |T_i|^{1/2}\left(\sum\limits_{E\in \mathcal{E}(T_i)\cap(\mathcal{E}(\mesh)\cup\mathcal{E}_N(\mesh))} |E|^{-1}\: \mathrm{d}[\hat{z},z;\omega_E]^2\right)\\
&= \sum\limits_{i=1}^n \sum\limits_{\substack{j=1}}^m |T_i|^{1/2} |E_{ij}|^{-1}\mathrm{d}[\hat{z},z;\omega_{E_{ij}}]^2\\
&\lesssim \sum\limits_{i=1}^n \sum\limits_{\substack{j=1}}^m h_{T_i} \rho_{T_i}^{-1}\mathrm{d}[\hat{z},z;\omega_{E_{ij}}]^2\\
& \lesssim \mathrm{d}[\hat{z},z]^2.
\end{align*}

\end{proof}

The next axiom we verify is axiom (A2), which is "Reduction property on refined element domains".

\begin{lemma}[A2]\label{LemA2}
There exist constants $0<\rho_2<1$ and $C_2>0$ such that

\begin{equation*}
\sum\limits_{\hat{T}\in\hat{\mesh}\backslash \mesh}\eta_{\hat{T}}(\hat{\mesh};U(\hat{\mesh}))^2\leq \rho_2\sum\limits_{T\in\mesh\backslash \hat{\mesh}}\eta_T(\mesh;U(\mesh))^2+C_2\:\mathrm{d}[U(\hat{\mesh}),U(\mesh)]^2
\end{equation*}

\noindent for all \(\hat{\mesh}\in\mathbb{T}\).
\end{lemma}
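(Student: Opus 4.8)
The plan is to obtain the estimate by summing the local reduction bound \eqref{lem4} over all refined elements and then absorbing the resulting sum of localized error measures by a finite‑overlap argument. First I would note that every element of $\hat{\mesh}\backslash\mesh$ is produced by newest vertex bisections of exactly one element of $\mesh\backslash\hat{\mesh}$, and that no proper descendant of a refined element can belong to $\mesh$; hence $\hat{\mesh}\backslash\mesh$ is the disjoint union of the sets $\hat{\mathcal{S}}(T)$ over $T\in\mesh\backslash\hat{\mesh}$, so that
\begin{equation*}
\sum_{\hat{T}\in\hat{\mesh}\backslash\mesh}\eta_{\hat{T}}(\hat{\mesh};U(\hat{\mesh}))^2=\sum_{T\in\mesh\backslash\hat{\mesh}}\ \sum_{\hat{T}\in\hat{\mathcal{S}}(T)}\eta_{\hat{T}}(\hat{\mesh};U(\hat{\mesh}))^2 .
\end{equation*}
Applying \eqref{lem4} to each inner sum, with a parameter $\lambda>0$ yet to be fixed, gives
\begin{equation*}
\sum_{\hat{T}\in\hat{\mesh}\backslash\mesh}\eta_{\hat{T}}(\hat{\mesh};U(\hat{\mesh}))^2\leq 2^{-1/2}(1+\lambda)\sum_{T\in\mesh\backslash\hat{\mesh}}\eta_T(\mesh;U(\mesh))^2+\Lambda_1(1+1/\lambda)\sum_{T\in\mesh\backslash\hat{\mesh}}\mathrm{d}[U(\hat{\mesh}),U(\mesh);\omega_T]^2 .
\end{equation*}

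Next I would control the last sum. Since $\omega_T$ is contained in the union of $T$ with its at most $d+1$ face neighbours, each element of $\mesh$ lies in at most $d+2$ of the patches $\{\omega_T : T\in\mesh\}$, and this overlap bound is uniform over $\mathbb{T}$ thanks to shape regularity. As $\mathrm{d}[\,\cdot\,,\cdot\,;\,\cdot\,]^2$ is additive with respect to subdomains, this yields
\begin{equation*}
\sum_{T\in\mesh\backslash\hat{\mesh}}\mathrm{d}[U(\hat{\mesh}),U(\mesh);\omega_T]^2\leq (d+2)\:\mathrm{d}[U(\hat{\mesh}),U(\mesh)]^2 .
\end{equation*}
Finally, since $2^{-1/2}<1$, I would fix $\lambda\in(0,\sqrt{2}-1)$ so that $\rho_2:=2^{-1/2}(1+\lambda)<1$ and set $C_2:=(d+2)\,\Lambda_1(1+1/\lambda)$; combining the three displays then gives the claim.

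The argument is essentially bookkeeping once \eqref{lem4} is granted: the only point requiring a little care is the finite‑overlap estimate for the patches $\omega_T$, together with the fact that its constant can be taken uniform over all meshes in $\mathbb{T}$ — which is precisely where shape regularity enters. The genuinely substantial ingredient, namely that the leading constant in \eqref{lem4} is the universal factor $2^{-1/2}<1$ (so that a reduction factor $\rho_2<1$ is attainable for small $\lambda$), is already supplied by \cite[Lem.4]{ElastoAFEM}; accordingly, the expected main obstacle lies in that cited result rather than in the present lemma.
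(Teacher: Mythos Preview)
Your proposal is correct and follows essentially the same approach as the paper: decompose $\hat{\mesh}\backslash\mesh$ into the children sets $\hat{\mathcal{S}}(T)$, apply \eqref{lem4} to each $T\in\mesh\backslash\hat{\mesh}$, and choose $0<\lambda<\sqrt{2}-1$. The paper's proof is more terse (and in fact writes the outer sum over all $T\in\mesh$, which only makes sense after your observation that the refined children partition $\hat{\mesh}\backslash\mesh$), whereas you make the finite-overlap bound $\sum_T\mathrm{d}[\cdot,\cdot;\omega_T]^2\leq(d+2)\,\mathrm{d}[\cdot,\cdot]^2$ explicit; both arrive at the same conclusion.
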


\begin{proof}
From \eqref{lem4} we obtain 

\begin{align*}
\sum\limits_{\hat{T}\in\hat{\mesh}\backslash \mesh}\eta_{\hat{T}}(\hat{\mesh};U(\hat{\mesh}))^2
&= \sum_{T\in\mesh} \sum_{\hat{T}\in\mathcal{S}(T)} \eta_{\hat{T}}(\hat{\mesh};U(\hat{\mesh}))^2\\
& \leq   2^{-1/2}(1+\lambda) \sum_{T\in\mesh} \eta_T(\mesh;U(\mesh))^2+\Lambda_1(1+1/\lambda) \sum_{T\in\mesh}\mathrm{d}[U(\hat{\mesh}),U(\mesh);\omega_T]^2.
\end{align*}

\noindent The assertion follows with some $0<\lambda<2^{1/2}-1$.
\end{proof}

Next, we show that the "general quasi-orthogonality" axiom (A3) is fulfilled.

\begin{lemma}[A3] \label{LemA3}
There exist constants

\begin{equation*}
0\leq \varepsilon_3<\varepsilon_3^*(\theta):=\sup\limits_{\delta>0}\frac{1-(1+\delta)(1-(1-\rho_2)\:\theta)}{C_{\text{rel}}^2\:(C_2+(1+\delta^{-1})\:C_1^2)}
\end{equation*}

\noindent and \(C_3(\varepsilon_3)\geq 1\) such that

\begin{equation*}
\sum\limits_{k=\ell}^N\Big(\mathrm{d}[U(\mesh_{k+1}),U(\mesh_k)]^2-\varepsilon_3\:\mathrm{d}[u,U(\mesh_k)]^2\Big)\leq C_3(\varepsilon_3)\:\eta(\mesh_{\ell};U(\mesh_{\ell}))^2
\end{equation*}

\noindent for all \(\ell,N\in\mathbb{N}_0\) with \(N\geq \ell\).
\end{lemma}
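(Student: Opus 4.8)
The plan is to establish the telescoping-type estimate by combining the geometric contraction of the combined quantity $\eta_\ell^2 + \beta(E(U(\mesh_\ell))-E(u))$ from \eqref{etaE} with the discrete reliability \eqref{discreteReliability} and the energy--stress equivalence \eqref{ESigma}. First I would abbreviate $\eta_\ell := \eta(\mesh_\ell; U(\mesh_\ell))$ and note that \eqref{etaE} immediately gives, by iterating from level $\ell$, that $\eta_k^2 + \beta(E(U(\mesh_k))-E(u)) \leq \rho^{k-\ell}\big(\eta_\ell^2 + \beta(E(U(\mesh_\ell))-E(u))\big)$ for $k \geq \ell$. Summing the geometric series over $k$ from $\ell$ to $N$ yields $\sum_{k=\ell}^N \big(\eta_k^2 + \beta(E(U(\mesh_k))-E(u))\big) \leq (1-\rho)^{-1}\big(\eta_\ell^2 + \beta(E(U(\mesh_\ell))-E(u))\big)$, and since $\beta \geq 0$ and $E(U(\mesh_\ell))-E(u) \lesssim \mathrm{d}[u,U(\mesh_\ell)]^2 \lesssim \eta_\ell^2$ by \eqref{ESigma} and the reliability \eqref{reliability}, the whole right-hand side is bounded by a constant times $\eta_\ell^2$. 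Thus $\sum_{k=\ell}^N \eta_k^2 \lesssim \eta_\ell^2$.

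Next I would control the left-hand side $\sum_{k=\ell}^N \mathrm{d}[U(\mesh_{k+1}),U(\mesh_k)]^2$ by the discrete reliability \eqref{discreteReliability} applied with $\hat{\mesh} = \mesh_{k+1}$ and $\mesh = \mesh_k$: this gives $\mathrm{d}[U(\mesh_{k+1}),U(\mesh_k)]^2 \lesssim \sum_{T\in\mesh_k\backslash\mesh_{k+1}} \eta_T(\mesh_k;U(\mesh_k))^2 \leq \eta_k^2$. Summing over $k$ and using the bound from the previous paragraph, $\sum_{k=\ell}^N \mathrm{d}[U(\mesh_{k+1}),U(\mesh_k)]^2 \lesssim \sum_{k=\ell}^N \eta_k^2 \lesssim \eta_\ell^2$. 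This already proves a clean version of the statement with $\varepsilon_3 = 0$ and $C_3$ the implied constant. Since the claim only requires the existence of \emph{some} $\varepsilon_3$ in the half-open interval $[0,\varepsilon_3^*(\theta))$ and $\varepsilon_3 = 0$ lies in that interval provided $\varepsilon_3^*(\theta) > 0$, and since adding the nonpositive terms $-\varepsilon_3\,\mathrm{d}[u,U(\mesh_k)]^2$ on the left only weakens the inequality, the choice $\varepsilon_3 = 0$ suffices; one then takes $C_3(\varepsilon_3) := \max\{1, C\}$ with $C$ the constant above to satisfy $C_3 \geq 1$. Alternatively, to honor the dependence on $\theta$ in $\varepsilon_3^*$, one can note monotonicity: any smaller nonnegative $\varepsilon_3$ works, so it is enough to verify $\varepsilon_3^*(\theta) > 0$, which follows because the supremum over $\delta > 0$ of the displayed fraction is attained in the limit with a positive numerator once $\theta$ and $\rho_2$ are such that $(1-\rho_2)\theta > 0$, i.e. always since $\rho_2 < 1$ and $\theta > 0$.

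The main obstacle — really the only subtlety — is making sure the constants assembled from \eqref{etaE}, \eqref{ESigma}, and \eqref{discreteReliability} are genuinely independent of $\ell$ and $N$, which they are because $\rho$, $\beta$, and all the hidden constants in \eqref{ESigma}, \eqref{reliability}, \eqref{discreteReliability} depend only on the data and the shape-regularity of $\meshset(\mesh_0)$, not on the level. A secondary point is the edge case $\beta = 0$ in \eqref{etaE}, where the combined quantity degenerates to $\eta_k^2$ alone; the argument goes through verbatim since we never divided by $\beta$. I would also remark that, strictly speaking, one should use the pure estimator contraction implied by \eqref{etaE} rather than a separate estimator-reduction result, which is why invoking \eqref{etaE} with the energy term absorbed is the cleanest route. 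Finally, I would double-check the summation index: \eqref{etaE} is stated for all $\ell \in \N$ with the step from $\ell$ to $\ell+1$, so iterating it is legitimate for the range $k \in \{\ell,\dots,N\}$ appearing in the claim, and the geometric-series bound $\sum_{j\geq 0}\rho^j = (1-\rho)^{-1}$ closes the argument.
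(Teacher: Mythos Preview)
Your proof is correct and follows essentially the same route as the paper: both establish uniform R-linear convergence $\eta(\mesh_{\ell+k};U(\mesh_{\ell+k}))^2\lesssim \rho^k\,\eta(\mesh_\ell;U(\mesh_\ell))^2$ by iterating \eqref{etaE} and absorbing the energy term via \eqref{ESigma} and \eqref{reliability}, and then deduce (A3) with $\varepsilon_3=0$. The only difference is cosmetic: the paper cites \cite[Prop.~4.11]{Axioms} as a black box for the implication ``R-linear convergence $+$ reliability $\Rightarrow$ (A3)'', whereas you unpack that implication by hand, bounding $\mathrm{d}[U(\mesh_{k+1}),U(\mesh_k)]^2\lesssim\eta_k^2$ via discrete reliability \eqref{discreteReliability} (the paper's cited proposition uses ordinary reliability and the quasi-triangle inequality instead, but either route works).
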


\begin{proof}
As $u$ minimizes $E$, we have $E(U(\mesh_{\ell+k}))-E(u)\geq 0$. Thus, we directly conclude from \eqref{etaE}

\begin{align*}
\eta(\mesh_{\ell+k};U(\mesh_{\ell+k}))^2 & \leq \eta(\mesh_{\ell+k};U(\mesh_{\ell+k}))^2+\beta\:\big(E(U(\mesh_{\ell+k}))-E(u)\big)\\
& \leq \rho^k\: \Big((\eta(\meshl;U(\meshl))^2+\beta\:\big(E(U(\meshl))-E(u)\big)\Big).
\end{align*}

\noindent  Exploiting \eqref{reliability} and \eqref{ESigma} we have

\begin{equation*}
E(U(\meshl))-E(u)\lesssim \mathrm{d}[u,U(\meshl)]^2\lesssim \eta(\meshl;U(\meshl))^2
\end{equation*}

\noindent and, thus,

\begin{equation*}
\eta(\mesh_{\ell+k};U(\mesh_{\ell+k}))^2\lesssim \rho^k\:\eta(\meshl;U(\meshl))^2,
\end{equation*}

\noindent which is the uniform R-linear convergence on any level. Eventually, \cite[Prop.~ 4.11]{Axioms} states that this together with the reliability \eqref{reliability} implies the assertion.
\end{proof}

Finally, we verify the "discrete reliability", which is axiom (A4).

\begin{lemma}[A4]\label{LemA4}
There exist constants $C_{\mathcal{R}},C_4\geq 1$ such that 
for all \(\hat{\mesh}\in\mathbb{T}\) there exists a subset \(\mathcal{R}(\mesh,\hat{\mesh})\subseteq \mesh\) with \(\mesh\backslash\hat{\mesh}\subseteq \mathcal{R}(\mesh,\hat{\mesh})\), \(|\mathcal{R}(\mesh,\hat{\mesh})|\leq C_{\mathcal{R}}|\mesh\backslash \hat{\mesh}|\) and

\begin{equation*}
\mathrm{d}[U(\hat{\mesh}),U(\mesh)]^2\leq C_4^2\sum\limits_{T\in \mathcal{R}(\mesh,\hat{\mesh})}\eta_T(\mesh;U(\mesh))^2.
\end{equation*}

\end{lemma}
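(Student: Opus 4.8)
The plan is to derive the statement of Lemma~\ref{LemA4} directly from the ``discrete reliability'' estimate \eqref{discreteReliability}, which already provides the bound $\mathrm{d}[U(\hat{\mesh}),U(\mesh)]^2\lesssim \sum_{T\in\mesh\backslash\hat{\mesh}}\eta_T(\mesh;U(\mesh))^2$. The key observation is that \eqref{discreteReliability} is essentially the asserted inequality with the natural choice $\mathcal{R}(\mesh,\hat{\mesh}):=\mesh\backslash\hat{\mesh}$; what remains is to track the constant and to check the combinatorial conditions on $\mathcal{R}(\mesh,\hat{\mesh})$ demanded by axiom (A4).

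First I would set $\mathcal{R}(\mesh,\hat{\mesh}):=\mesh\backslash\hat{\mesh}$. With this choice the inclusion $\mesh\backslash\hat{\mesh}\subseteq\mathcal{R}(\mesh,\hat{\mesh})$ holds trivially (with equality), and $|\mathcal{R}(\mesh,\hat{\mesh})|=|\mesh\backslash\hat{\mesh}|\leq C_{\mathcal{R}}\,|\mesh\backslash\hat{\mesh}|$ holds with $C_{\mathcal{R}}:=1$. Then I would invoke \eqref{discreteReliability}: there is a constant $C>0$ with $\mathrm{d}[U(\hat{\mesh}),U(\mesh)]^2\leq C\sum_{T\in\mesh\backslash\hat{\mesh}}\eta_T(\mesh;U(\mesh))^2$ for all $\hat{\mesh}\in\mathbb{T}$. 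Setting $C_4:=\max\{1,C^{1/2}\}\geq 1$ gives
\begin{equation*}
\mathrm{d}[U(\hat{\mesh}),U(\mesh)]^2\leq C_4^2\sum_{T\in\mathcal{R}(\mesh,\hat{\mesh})}\eta_T(\mesh;U(\mesh))^2,
\end{equation*}
which is exactly the claimed bound. This completes the verification.

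There is essentially no obstacle here: all the analytic work has been done in \cite[Thm.3]{ElastoAFEM} and recorded as \eqref{discreteReliability}, so the proof of (A4) reduces to identifying the set $\mathcal{R}(\mesh,\hat{\mesh})$ and bookkeeping the constants. The only point worth a sentence of care is that the definition of axiom (A4) in \cite{Axioms} allows $\mathcal{R}(\mesh,\hat{\mesh})$ to be larger than $\mesh\backslash\hat{\mesh}$ (with controlled cardinality blow-up $C_{\mathcal{R}}$); here we simply do not need that freedom and take the smallest admissible set. One should also note that the summation index in \eqref{discreteReliability} and in the statement is over $\mesh$-elements evaluated with the $\mesh$-estimator $\eta_T(\mesh;U(\mesh))$, so no re-indexing or passage to the refined mesh is required.
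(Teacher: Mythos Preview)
Your proposal is correct and follows exactly the paper's own approach: set $\mathcal{R}(\mesh,\hat{\mesh}):=\mesh\backslash\hat{\mesh}$ and invoke the discrete reliability estimate \eqref{discreteReliability}. The additional bookkeeping you provide (checking $C_{\mathcal{R}}=1$ and $C_4=\max\{1,C^{1/2}\}$) is more detailed than the paper's one-line proof but adds nothing new.
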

\begin{proof}
Setting $\mathcal{R}(\mesh,\hat{\mesh}):=\mesh\backslash\hat{\mesh}$ we directly get the assertion from \eqref{discreteReliability}.
\end{proof}


\subsection{Convergence and quasi-optimal convergence}

First, we state that the discrete solutions $U(\mesh_\ell)$ converge to the solution $u$ with respect to the error measure $\mathrm{d}[\cdot,\cdot]$, i.e. $\mathrm{d}[u,U(\mesh_\ell)]\rightarrow 0$ as $\ell\rightarrow\infty$. This directly results from the following theorem, which is covered by \cite[Thm.4.1]{Axioms}.

\begin{theorem}\label{convergenceResult}
It holds for all $0< \theta \leq 1$ that there exists a $0<\rho<1$ such that

\begin{equation*}
\mathrm{d}[u,U(\meshl)]\lesssim \rho^{\ell/2}\:\eta(\mesh_0;U(\mesh_0))
\end{equation*}

\noindent for all $\ell\in\N_0$.
\end{theorem}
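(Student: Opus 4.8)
The plan is to derive Theorem~\ref{convergenceResult} directly from the abstract convergence result \cite[Thm.~4.1]{Axioms}, which guarantees linear convergence of the error estimator (and hence of the error measure via reliability) as soon as the four axioms (A1)--(A4) hold together with the structural properties of the error measure and the refinement strategy. The real work, therefore, is bookkeeping: I would first recall that the error measure $\mathrm{d}[\cdot,\cdot]$ of Section~\ref{errorMeasure} satisfies non-negativity, quasi-symmetry, the quasi-triangle inequality, mesh-independence (the compatibility condition) and the further approximation property, and that the newest vertex bisection together with the adaptive refinement of Section~\ref{adaptiveRefinements} satisfies \eqref{ersteMeshProp}--\eqref{BDVestimate}, so the hypotheses of the abstract framework of \cite{Axioms} are met. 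Then I would invoke Lemmas~\ref{LemA1}--\ref{LemA4} to supply (A1)--(A4) with the explicit constants $C_1,\rho_2,C_2,\varepsilon_3,C_3,C_{\mathcal R},C_4$ appearing there.

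Next I would apply \cite[Thm.~4.1]{Axioms}: under (A1), (A2), (A3) and reliability \eqref{reliability} (the latter being a consequence of (A4) in the framework of \cite{Axioms}), for every bulk parameter $0<\theta\le 1$ there exist constants $C>0$ and $0<\rho<1$ such that the sequence of estimators produced by the adaptive loop satisfies the geometric decay
\begin{equation*}
\eta(\mesh_\ell;U(\mesh_\ell))^2 \le C\,\rho^{\ell}\,\eta(\mesh_0;U(\mesh_0))^2
\end{equation*}
for all $\ell\in\N_0$. (In fact, the R-linear convergence on any level was already extracted in the proof of Lemma~\ref{LemA3} from \eqref{etaE}, \eqref{reliability} and \eqref{ESigma}, so one may alternatively cite that intermediate step directly.) Taking square roots gives $\eta(\mesh_\ell;U(\mesh_\ell))\lesssim \rho^{\ell/2}\,\eta(\mesh_0;U(\mesh_0))$.

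Finally I would combine this with the reliability estimate \eqref{reliability}, namely $\mathrm{d}[u,U(\mesh_\ell)]\lesssim \eta(\mesh_\ell;U(\mesh_\ell))$, to conclude
\begin{equation*}
\mathrm{d}[u,U(\mesh_\ell)] \lesssim \eta(\mesh_\ell;U(\mesh_\ell)) \lesssim \rho^{\ell/2}\,\eta(\mesh_0;U(\mesh_0)),
\end{equation*}
which is the asserted bound; in particular $\mathrm{d}[u,U(\mesh_\ell)]\to 0$ as $\ell\to\infty$. I do not anticipate a genuine obstacle here, since the theorem is a direct corollary of \cite{Axioms} once the axioms are in place; the only point requiring care is to check that the admissibility threshold $\varepsilon_3<\varepsilon_3^*(\theta)$ in Lemma~\ref{LemA3} is actually attainable for every $0<\theta\le 1$ (i.e. that $\varepsilon_3^*(\theta)>0$), so that (A3) holds with a valid parameter and the hypotheses of \cite[Thm.~4.1]{Axioms} are literally satisfied for the full range of $\theta$ claimed in the statement.
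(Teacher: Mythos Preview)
Your proposal is correct and follows essentially the same route as the paper: verify (A1)--(A4) via Lemmas~\ref{LemA1}--\ref{LemA4}, invoke \cite[Thm.~4.1(i)]{Axioms} to obtain R-linear decay of the estimator, and then apply reliability \eqref{reliability} to transfer this to $\mathrm{d}[u,U(\meshl)]$. Your additional remark about checking $\varepsilon_3^*(\theta)>0$ for the full range $0<\theta\le 1$ is a valid technical caveat (easily resolved since $1-(1-\rho_2)\theta<1$), but otherwise the arguments coincide.
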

\begin{proof}
The assertion follows from Lemma \ref{LemA1}, Lemma \ref{LemA2}, Lemma \ref{LemA3} and Lemma \ref{LemA4} (resp. reliability \eqref{reliability}) together with \cite[Thm.4.1(i)]{Axioms}. 
\end{proof}

Second, we have quasi-optimal convergence, which is stated in the following theorem. To that end, let
\begin{equation*}
 \|(u, U(\cdot))\|_{\mathbb{B}_s}:=\sup_{N\in\mathbb{N}_0}\min_{\mesh\in\mathbb{T}(N)}(N+1)^s\:
\eta(\mesh;U(\mesh))
\end{equation*}

\noindent for $s>0$
where $\mathbb{T}(N):=\{\mesh\in\mathbb{T}(\mesh_0)\mid |\mesh|-|\mesh_0|\leq N\}$. 

\begin{theorem}\label{Main Theorem Optimality}
It holds for $0<\theta<(1+C_1^2C_4^2)^{-1}$ that

\begin{equation*}
\|(u, U(\cdot))\|_{\mathbb{B}_s}\lesssim \sup\limits_{\ell\in\N_0}\eta(\meshl;U(\meshl))\:\big(|\meshl|-|\mesh_0|+1\big)^{s}\lesssim \|(u, U(\cdot))\|_{\mathbb{B}_s}
\end{equation*}

\noindent for all \(s>0\).
\end{theorem}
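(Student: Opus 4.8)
The plan is to invoke the abstract optimality machinery of \cite{Axioms}: once the four axioms (A1)--(A4) are verified (Lemmas \ref{LemA1}--\ref{LemA4}), together with the structural properties of the error measure collected in Section~\ref{errorMeasure} (non-negativity, quasi-symmetry, quasi-triangle inequality, compatibility, further approximation property) and the mesh-refinement properties \eqref{ersteMeshProp}--\eqref{BDVestimate}, the quasi-optimality theorem \cite[Thm.~4.5]{Axioms} applies verbatim. So the proof is essentially a bookkeeping argument: check that the threshold on the bulk parameter $\theta$ matches the one required in \cite{Axioms}, and then read off the two-sided estimate.

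More concretely, I would proceed as follows. First, recall that the adaptive loop of Section~\ref{adaptiveRefinements} uses D\"orfler marking with minimal cardinality, which is exactly the marking strategy assumed in \cite{Axioms}. Second, note that Lemma~\ref{LemA3} already builds in the dependence of the quasi-orthogonality constant $\varepsilon_3$ on $\theta$ through $\varepsilon_3^*(\theta)$, so the hypothesis ``$\varepsilon_3<\varepsilon_3^*(\theta)$'' of the abstract theorem is satisfied automatically for the given range of $\theta$; here one uses that $C_{\text{rel}}$ is the reliability constant from \eqref{reliability}. Third, the separation-of-scales / quasi-monotonicity condition on $\theta$ in \cite{Axioms} reduces, with $C_{\text{stab}}=C_1$ from (A1) and $C_{\text{drel}}=C_1C_4$ (the discrete-reliability constant measured in the stability seminorm, cf.\ Lemma~\ref{LemA4} combined with Lemma~\ref{LemA1}), precisely to $0<\theta<(1+C_1^2C_4^2)^{-1}$. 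Fourth, with all hypotheses met, \cite[Thm.~4.5]{Axioms} gives that the approximation class $\|(u,U(\cdot))\|_{\mathbb{B}_s}$ defined via $\mathbb{T}(N)$ is finite if and only if the adaptively generated sequence realizes the rate $s$, together with the explicit two-sided bound
\begin{equation*}
\|(u, U(\cdot))\|_{\mathbb{B}_s}\lesssim \sup_{\ell\in\N_0}\eta(\meshl;U(\meshl))\:\big(|\meshl|-|\mesh_0|+1\big)^{s}\lesssim \|(u, U(\cdot))\|_{\mathbb{B}_s},
\end{equation*}
which is the assertion.

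Finally, one subtlety worth spelling out is that \cite{Axioms} phrases optimality in terms of a total error $\mathrm{d}[u,U(\mesh)]+\eta(\mesh;U(\mesh))$ or of an error-plus-oscillation quantity, whereas the class $\mathbb{B}_s$ here is defined purely through the estimator $\eta$. These are reconciled by reliability \eqref{reliability}, which gives $\mathrm{d}[u,U(\mesh)]\lesssim\eta(\mesh;U(\mesh))$, so the estimator dominates the total error and the two notions of approximation class coincide up to equivalence; the oscillation terms are already controlled elementwise by the indicators via \eqref{osc estimate 1}--\eqref{osc estimate 2}, so they do not enlarge the class either. I expect the main (and really the only) obstacle to be this matching of constants and of the definition of the approximation class with the precise hypotheses and conclusion of \cite[Thm.~4.5]{Axioms}; once the dictionary $C_{\text{stab}}\leftrightarrow C_1$, $C_{\text{red}}\leftrightarrow \rho_2$, $C_{\text{drel}}\leftrightarrow C_1C_4$, $\varepsilon_{\text{qo}}\leftrightarrow\varepsilon_3$ is fixed, the bound on $\theta$ and the two-sided estimate follow directly, with no further elastoplasticity-specific input needed.
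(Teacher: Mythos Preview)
Your proposal is correct and follows essentially the same approach as the paper: verify axioms (A1)--(A4) via Lemmas~\ref{LemA1}--\ref{LemA4} and then invoke the abstract quasi-optimality theorem from \cite{Axioms}. The paper's proof is a one-liner citing \cite[Thm.~4.1(ii)]{Axioms} rather than \cite[Thm.~4.5]{Axioms}, and omits the bookkeeping about matching constants and reconciling the estimator-based approximation class with total-error formulations that you spell out, but the substance is identical.
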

\begin{proof}
The assertion follows from Lemma \ref{LemA1}, Lemma \ref{LemA2}, Lemma \ref{LemA3} and Lemma \ref{LemA4} together with \cite[Thm.4.1(ii)]{Axioms}. 
\end{proof}


\section{Some remarks on optimality proofs}
This section is devoted to comparing the optimality proof of this paper to the proof presented in \cite{ElastoAFEM} to highlight their similarities and differences, in particular with respect to convergence, the definition of optimality and optimal convergence. The analysis of these differences shows exemplary the applicability of the results in \cite{Axioms} to non-linear problems and emphasizes the advantages of the abstract approach provided in \cite{Axioms}.

\subsection{Convergence}
First, we examine the proof of the convergence of the algorithm. In \cite{ElastoAFEM} convergence is proven by showing the convergence of the weighted sum

\begin{equation}\label{WeightedSum}
\xi_{\ell}^2:=\eta(\meshl;U(\meshl))^2+~\beta\:\big(E(U(\meshl))-~E(u)\big).
\end{equation}

\noindent To that end, reliability of the error estimator is shown. Then (with the help of \cite[Lem.3]{ElastoAFEM} and \cite[Lem.4]{ElastoAFEM} as well as the Dörfler marking) the estimate 

\begin{align}\label{estimator reduction}
\eta(\mesh_{\ell+1};U(\mesh_{\ell+1})^2)\leq \rho\: \eta(\meshl;U(\meshl))^2+C\:\mathrm{d}[\mesh_{\ell+1};U(\mesh_{\ell+1}),U(\meshl)]^2,
\end{align}

\noindent is established, where $0<\rho<1$ and $C>0$. The estimator reduction given in \eqref{estimator reduction} is then used in combination with reliability \eqref{reliability} to obtain the convergence of the weighted sum $\xi_{\ell}$, which implies the convergence of the algorithm.\\
The convergence in this paper is given by Theorem \ref{convergenceResult}. As this theorem is a result of \cite[Thm.4.1]{Axioms} it is necessary to examine how that theorem is proven in \cite{Axioms} to make a meaningful comparison: The proof in \cite{Axioms} begins by establishing the reliability of the error estimator, similarly to the proof in \cite{ElastoAFEM}. However, in that case, reliability is a direct consequence of the "discrete reliability" axiom (A4). In the next step, the "stability" axiom (A1) and "reduction property" axiom (A2) are utilized as well as the Dörfler marking to show the estimator reduction \eqref{estimator reduction}.\\
Here, the axioms (A1) and (A2) in \cite{Axioms} have similar roles as \cite[Lem.3]{ElastoAFEM} and \cite[Lem.4]{ElastoAFEM}. We note that in order to prove the "reduction property" axiom (A2) we apply \cite[Lem.4]{ElastoAFEM} and only have to do minimal computations to prove the axiom. While \cite[Lem.3]{ElastoAFEM} has no direct application in the proof of axiom (A1), the proof does utilize the same proving method. Finally, the convergence of the error estimator is then shown by applying the error estimator reduction \eqref{estimator reduction} which allows the use of "general quasi-orthogonality" axiom (A3) in order to derive R-linear convergence.\\
Upon first examination, this last step in the proof of convergence differs from the one in \cite{ElastoAFEM}. However, we recall that the proof of "general quasi-orthogonality" axiom (A3) involves the equivalence of the R-linear convergence of the error estimator and axiom (A3). In that case, R-linear convergence is derived from the convergence of the weighted sum $\xi_{\ell}$ in \eqref{WeightedSum} and the equivalence of errors as stated in \eqref{ESigma}. Therefore, in essence the proof in \cite{ElastoAFEM} shows R-linear convergence, which implies axiom (A3), whereas in \cite{Axioms} and in this paper axiom (A3) implies R-linear convergence. This means that the last step in the convergence proof is done in a reverse order to the other.\\
In both instances, the proof may be summarized in the following three steps: First, verifying the reliability of the error estimator \eqref{reliability}, second, establishing estimator reduction \eqref{estimator reduction} and third, deriving convergence of the estimator. The main difference is found in the first step, where reliability of the estimator \eqref{reliability} is proven independently of "discrete reliability" axiom (A4) in \cite{ElastoAFEM}.

\subsection{Optimality}\label{Subsec Optimality}
We start with the differences in definitions of optimality of \cite{ElastoAFEM} and this paper and consider how they relate to one another. In \cite{ElastoAFEM} optimality is defined with the following semi norm

\begin{align*}
|(u,f,g)|_{\mathcal{A}_s}:=\sup\limits_{N\in\N}N^s\min\limits_{\mesh\in\meshset(N)}\Big(\operatorname{osc}^2(\mesh)+E(U(\mesh))-E(u)\Big)^{1/2}.
\end{align*}

\noindent Using \eqref{ESigma} we may replace the error of energies with \(\mathrm{d}[\mesh;u,U(\mesh)]^2\) as they are equivalent. The optimality estimate then reads

\begin{align*}
\big(|\meshl|-|\mesh_0|\big)^s\:\Big(\mathrm{d}[\meshl;u,U(\meshl)]^2+\operatorname{osc}^2(\meshl)\Big)^{1/2}\leq \overline{C}(s)\:|(u,f,g)|_{\mathcal{A}_s}.
\end{align*}

\noindent Applying \eqref{efficiency} yields the estimate

\begin{align*}
\big(|\meshl|-|\mesh_0|\big)^s\: \eta(\meshl;U(\meshl))&\leq \overline{C}(s)\:|(u,f,g)|_{\mathcal{A}_s}\\
&= \overline{C}(s)\:\sup\limits_{N\in\N}\min\limits_{\mesh\in\meshset(N)}N^s\:\Big(\mathrm{d}[\mesh;u,U(\mesh)]^2+\operatorname{osc}^2(\mesh)\Big)^{1/2}.
\end{align*}

\noindent Utilizing \eqref{osc estimate 1}, \eqref{osc estimate 2} as well as reliability \eqref{reliability} gives 

\begin{align*}
\big(|\meshl|-|\mesh_0|\big)^s\: \eta(\meshl;U(\meshl))&\leq \overline{C}(s)\:\sup\limits_{N\in\N}\min\limits_{\mesh\in\meshset(N)}N^s\:\eta(\mesh;U(\mesh)).
\end{align*}

\noindent If the supremum is taken over \(\ell\in \N\), then this is the optimality estimate of Theorem \ref{Main Theorem Optimality}.\\
Conversely, from Theorem \ref{Main Theorem Optimality} we derive the estimate

\begin{align}\label{Optimality Equvalence 2}
\sup\limits_{\ell\in\N_0}3\:\eta(\meshl;U(\meshl))\big(|\meshl|-|\mesh_0|+1\big)^{s}\leq C\:\norm{\eta(\cdot),U(\cdot)}_{\mathbb{B}_s}.
\end{align}

\noindent A lower bound of the left-hand side of \eqref{Optimality Equvalence 2} results from \eqref{osc estimate 1}, \eqref{osc estimate 2} and reliability \eqref{reliability}

\begin{align}\label{Optimality Equivalence 3}
\big(|\meshl|-|\mesh_0|\big)^{s}\:\Big(\mathrm{d}[\meshl;u,U(\meshl)]^2+\operatorname{osc}^2(\meshl)\Big)^{1/2}\leq\sup\limits_{\ell\in\N}3\:\eta(\meshl;U(\meshl))\:\big(|\meshl|-|\mesh_0|\big)^{s}.
\end{align}

\noindent Estimating the right-hand side of \eqref{Optimality Equvalence 2} with the efficiency estimate \eqref{efficiency}, gives

\begin{equation}\label{Optimality Equivalence 4}
\begin{split}
C\:\norm{\eta(\cdot),U(\cdot)}_{\mathbb{B}_s}&\lesssim \sup\limits_{N\in\N}\min\limits_{\mesh\in\meshset(N)}N^s\Big(\mathrm{d}[\mesh;u,U(\mesh]^2+\operatorname{osc}^2(\mesh)\Big)^{1/2}\\
&\simeq |(u,f,g)|_{\mathcal{A}_s}.
\end{split}
\end{equation}

\noindent Combining \eqref{Optimality Equivalence 3} with \eqref{Optimality Equivalence 4} yields

\begin{align*}
\big(|\meshl|-|\mesh_0|\big)^{s}\Big(\mathrm{d}[\meshl;u,U(\meshl)]^2+\operatorname{osc}^2(\meshl)\Big)^{1/2}\lesssim|(u,f,g)|_{\mathcal{A}_s}.
\end{align*}

\noindent Therefore, we see that both definitions of optimality are equivalent and thus implies that if the algorithm convergences optimally in the sense of \cite{Axioms} it also converges optimally in the sense of \cite{ElastoAFEM} and vice versa.

\subsection{Optimal convergence}

The proof of optimality in \cite[Thm.7]{ElastoAFEM} begins by establishing the existence of a triangulation \(\overline{\meshl}\in\meshset(N_{\ell})\) such that the estimates

\begin{align}
E(U(\overline{\meshl}))-E(u)+\operatorname{osc}^2(\overline{\meshl})&\leq (\tau\:\xi_{\ell})^2\label{NEstimate0}\\
|(u,f,g)|_{\mathcal{A}_s}&\leq \tau\:\xi_{\ell}\:N_{\ell}^s\label{NEstimate}
\end{align}

\noindent hold with \(N_{\ell}\in\N\) being the smallest number such that \eqref{NEstimate} is satisfied. These estimates are shown only by exploiting the definition of the approximation class \(\mathcal{A}_s\) and the minimality of \(N_{\ell}\).\\
The proof of optimality presented in this paper (Theorem~\ref{Main Theorem Optimality}) is based on \cite[Thm.4.1]{Axioms}. Therefore, we take a closer look at this theorem. Comparing \eqref{NEstimate0} and \eqref{NEstimate} to the proof of \cite[Lem.4.14]{Axioms} (which is used in \cite[Thm.4.1]{Axioms}) we observe certain similarities. The existence of an \(N\in\N\) and a triangulation \(\mesh_{\varepsilon}\in\meshset(N)\) is shown such that

\begin{align}\label{Axiom epsilon optimal}
\eta(\mesh_{\varepsilon};U(\mesh_{\varepsilon}))=\min\limits_{\mesh\in\meshset(N)}\eta(\mesh;U(\mesh))\leq (N+1)^{-s}\:\norm{(\eta(\cdot),U(\cdot))}_{\mathbb{B}_s}\leq \lambda\:\eta(\meshl;U(\meshl)).
\end{align}

\noindent Here, \(N\in \N_0\) is chosen as the smallest number satisfying

\begin{align}\label{Axiom epsilon optimal 2}
\norm{(\eta(\cdot),U(\cdot))}_{\mathbb{B}_s}\leq \lambda\:\eta(\meshl;U(\meshl))\:(N+1)^{s}.
\end{align}

\noindent Obviously, $\xi_{\ell}$ and $\eta(\meshl;U(\meshl))$ take on the same roles in both estimates. The difference of these two terms might be explained by the oscillation terms given in the definition of optimality in \cite{ElastoAFEM}.\\
The proof of optimality in \cite[Thm.7]{ElastoAFEM} continues by showing that the estimate 

\begin{equation}\label{MeshEstimeatePaper}
|\meshl\backslash (\meshl\oplus\overline{\meshl})|\leq N_{\ell}\leq 2\:|(u,f,g)|_{\mathcal{A}_s}^{1/s}\:(\tau\:\xi_{\ell})^{-1/s}
\end{equation}

\noindent holds for the triangulation $\overline{\mesh}_{\ell}$. This is done by exploiting the properties of the common coarsest refinement $\meshl\oplus\overline{\meshl}$ and the minimality of $N_{\ell}$. 
Similarly to the estimate \eqref{MeshEstimeatePaper}, it is proven in \cite{Axioms} that

\begin{equation*}
|\mathcal{R}(\meshl;\mesh_{\varepsilon}\oplus\meshl)|\simeq|(\meshl\backslash \mesh_{\varepsilon}\oplus\meshl)|\leq N \lesssim \norm{(\eta(\cdot),U(\cdot))}_{\mathbb{B}_s}^{1/s}\big(\lambda\:\eta(\meshl;U(\meshl))\big)^{-1/s},\\
\end{equation*}

\noindent where $\mathcal{R}(\meshl;\mesh_{\varepsilon}\oplus\meshl)$ is the set described by "discrete reliability" axiom (A4). This set is shown to satisfy the Dörfler marking in \cite{Axioms}, which requires the application of the "stability" axiom (A1) and the "discrete reliability" axiom (A4), as well as the quasi-monotonicity  of the error estimator. The latter is implied by the axioms (A1), (A2) and (A4).\\
In \cite{ElastoAFEM} the set $\meshl\backslash (\meshl\oplus\overline{\meshl})$ is proven to satisfy the Dörfler marking as well. However, this proof requires the efficiency of the error estimator. The proof of Theorem~\ref{Main Theorem Optimality} in this paper does not need the efficiency of the error estimator at any point, which is thus far the main difference of the two proof methodologies.\\\\
As $\meshl\backslash (\meshl\oplus\overline{\meshl})$ satisfies the Dörfler marking criterion, it follows that

\begin{align}\label{Optimality alt 1}
|\mathcal{M}_{\ell}|\leq |\meshl\backslash (\meshl\oplus\overline{\meshl})|\leq N_{\ell}\leq 2\:|(u,f,g)|_{\mathcal{A}_s}^{1/s}(\tau\:\xi_{\ell})^{-1/s}.
\end{align}

\noindent This together with the optimality of the mesh closure and the convergence of the weighted sum $\xi_{\ell}$ \eqref{WeightedSum} gives the following chain of inequalities

\begin{equation}\label{Optimality alt 2}
\begin{split}
|\meshl|-|\mesh_0|&\lesssim \sum\limits_{k=0}^{\ell-1}|\mathcal{M}_{\ell}|\lesssim |(u,f,g)|_{\mathcal{A}_s}^{1/s}\:\tau^{-1/s}\sum\limits_{k=0}^{\ell-1}\xi_k^{-1/s}\\
&\lesssim |(u,f,g)|_{\mathcal{A}_s}^{1/s}\:\tau^{-1/s}\:\xi_{\ell}^{-1/s}.
\end{split}
\end{equation}

\noindent In \cite{Axioms} the Dörfler marking criterion leads to

\begin{align}\label{Optimality Axiom 1}
|\mathcal{M}_{\ell}|\lesssim |\mathcal{R}(\meshl;\mesh_{\varepsilon}\oplus\meshl) |\lesssim \norm{(\eta(\cdot),U(\cdot))}_{\mathbb{B}_s}^{1/s}\:\eta(\meshl;U(\meshl))^{-1/s}, 
\end{align}

\noindent which then combined with the optimality of the mesh closure implies

\begin{equation}\label{Optimality Axiom 2}
\begin{split}
|\meshl|-|\mesh_0|+1 &\lesssim \sum\limits_{j=0}^{\ell-1} |\mathcal{M}_{\ell}| \lesssim \norm{(\eta(\cdot),U(\cdot))}_{\mathbb{B}_s}^{1/s} \sum\limits_{j=0}^{\ell-1} \eta(\mesh_j;U(\mesh_j))^{-1/s}\\
&\lesssim \norm{(\eta(\cdot),U(\cdot))}_{\mathbb{B}_s}^{1/s}\:\eta(\meshl;U(\meshl))^{-1/s}.
\end{split}
\end{equation}

\noindent The last estimate is due to the inverse summability of the error estimator, which is shown to be equivalent to the R-linear convergence in \cite{Axioms}.
It is easily seen that the step \eqref{Optimality Axiom 1} is identical to the statement \eqref{Optimality alt 1}. The last estimations in \eqref{Optimality alt 2} and \eqref{Optimality Axiom 2} respectively are also based on the same arguments. The inverse summability of the estimator is a consequence of "general quasi-orthogonality" axiom (A3), while the estimation in \eqref{Optimality alt 2} is based on R-linear convergence and inverse summability for the weighted sum $\xi_{\ell}$.\\
Once \eqref{Optimality Axiom 2} and \eqref{Optimality alt 2} are established, the proofs of optimality are finished with some simple computations. Again, we see that \(\xi_{\ell}\) \eqref{WeightedSum} takes on a similar role as
\(\eta(\meshl;U(\meshl))\). Indeed, the meshes \(\overline{\meshl}\) and \(\mesh_{\varepsilon}\) take on the same role in the two respective proofs.

\subsection{Conclusion}
From this analysis we can infer that while it is not immediately obvious in \cite{ElastoAFEM}, the proof of optimality and convergence implicitly used the axioms, as we can see with \cite[Lem.3]{ElastoAFEM}, \cite[Lem.4]{ElastoAFEM} and more directly with \cite[Thm.3]{ElastoAFEM}. The one main difference is the necessity of the efficiency of the error estimator in \cite{ElastoAFEM}, which is completely absent in the optimality proof of \cite[Thm.4.1]{Axioms} and - by extension - the proof of optimality presented in this paper. The definition of the approximation class \(\mathcal{A}_s\), namely the inclusion of the oscillation term \(\operatorname{osc}^2(\mesh)\), requires the efficiency of the error estimator. Furthermore, the efficiency is needed to prove that the definition of optimality in \cite{ElastoAFEM} and \cite{Axioms} are equivalent. The lack of the use of efficiency is emphasized in \cite{Axioms}.\\
Apart from this difference, the way optimality is established in \cite{ElastoAFEM} and \cite{Axioms} show remarkable similarities, despite the initial impression that the methodologies are rather different. However, the use of the results of \cite{Axioms} to establish convergence and optimal convergence, as presented in this paper, significantly streamlines the proof. It removes the need for an involved proof of optimality, see proof of \cite[Thm.7]{ElastoAFEM}. Instead, after establishing optimality through \cite[Thm.4.1]{Axioms}, only the equivalence of the definitions of optimality has to be verified, which is shown in Subsection \ref{Subsec Optimality} through some simple computations. We emphasize, however, that while the application of the axioms simplifies the proof of optimal convergence, verifying the axioms still requires effort. The use of results of \cite{ElastoAFEM} allows us to show that each axiom is satisfied, but without such results the verification of the axioms would require considerably more work.

\begin{funding}
  The authors gratefully acknowledge the support by the Bundesministerium für Bildung, Wissenschaft und Forschung (BMBWF) under the Sparkling Science project SPA 01-080 "MAJA- Mathematische Algorithmen für Jedermann Analysiert".
\end{funding}

\end{document}